\theoremstyle{definition}
\newcommand{\invGi}{ \ensuremath \mathrm{inv}\Gamma}
\DeclareMathOperator{\sign}{sign}
\providecommand{\upstrut}[1]{\rule[-1pt]{0pt}{#1}}
\newlength{\funcwd}\newlength{\funcht}
 \newcommand{\invmathsvs}[1]{
\mbox{
\settowidth{\funcwd}{$#1$}\settoheight{\funcht}{$#1$}%
\raisebox{.93\funcht}{%
 \makebox[0pt][l]{\hspace{.15\funcwd}{$\scriptscriptstyle \vee$} }}$#1$}
 }
 \newcommand{\invGs}%
 {\mbox{\settowidth{\funcwd}{$\Gamma$}\settoheight{\funcht}{$\Gamma$}%
\raisebox{.90\funcht}{%
\makebox[0pt][l]{\hspace{.15\funcwd}{$\scriptscriptstyle \vee$} }}%
\scalebox{1}[0.875]{$\Gamma$}}}
\newcommand{\vstrut}[2]{\rule[#1]{0in}{#2}}
\newtheorem{Th}{Theorem}
\begin{document}
\title{The Inverse of the Complex Gamma Function}
\author{
\IEEEauthorblockN{D. J. Jeffrey}
\IEEEauthorblockA{Ontario Research Centre for Computer Algebra \\ and Department of Mathematics \\
University of Western Ontario, Canada\\
\texttt{djeffrey@uwo.ca}
}
\and
\IEEEauthorblockN{Stephen M. Watt}
\IEEEauthorblockA{Ontario Research Centre for Computer Algebra \\ and Cheriton School of Computer Science \\
University of Waterloo, Canada\\
\texttt{smwatt@uwaterloo.ca}
}
}

\maketitle
\begin{abstract}
We consider the functional inverse of the Gamma function in the complex plane, where it is multi-valued, and define a set of suitable branches by proposing a natural extension from the real case.
\end{abstract}
\section{Introduction}
Professor James Davenport, whose 70th birthday is being celebrated at this conference, has been a prolific investigator
of multivalued functions. His work has covered several topics in the area, dating back more than two decades---see, for example, \cite{Davenport1,Davenport2,Davenport3,Davenport4}.  The present article follows in this tradition.

A recent review of the Gamma function and its properties
comments that the inverse function has received little study~\cite{CorlessBorwein}.
Some basic properties
were given in \cite{Pedersen2015} and \cite{Amenyo}, but the structure of the branches in the complex plane has not been considered.  This article provides some results in this area.

The Gamma function for complex $z$ can be defined by \cite{DLMF} 
\begin{equation} \label{def:integral}
	\Gamma(z+1) =z! = \int_0^\infty t^z\mathrm{e}^{-t}\,\mathrm{d}t  \quad \mbox{for} \quad \Re z > -1 \ ,
\end{equation}	
together with Euler's reflection formula
\begin{equation}\label{eq:reflect}
  \Gamma(1-z) \Gamma(z) = \frac {\pi}{\sin \pi z}\ .
\end{equation}	
The $\Gamma$ notation shifts the argument with respect to factorial by $1$, as shown in \eqref{def:integral};
this shift, often called a \emph{minor but continual nuisance} \cite{CorlessBorwein},
is usually blamed on Legendre; but in fact Euler did this first \cite{gronau2003}.
We denote the inverse of $\Gamma$ by either
$w = \invGs_k(z)$, or $w=\invGi_k(z)$, where $k$ will label the branch, which will be defined here.

We remind the reader of some basic properties of $\Gamma$. Figure~\ref{fig:gamma} shows a plot of the $\Gamma$-function.
The derivative of $\Gamma$ introduces the digamma function, $\Psi$, through the logarithmic derivative:
\[ \frac{d}{dx} \ln \Gamma(x)=\Psi(x)\ ,\]
thus making $\Gamma^\prime\!=\Psi\Gamma$. The complex conjugate is $\overline{\Gamma(z)} = \Gamma(\overline z)$.
\begin{figure}
\centering
\includegraphics[scale=0.5]{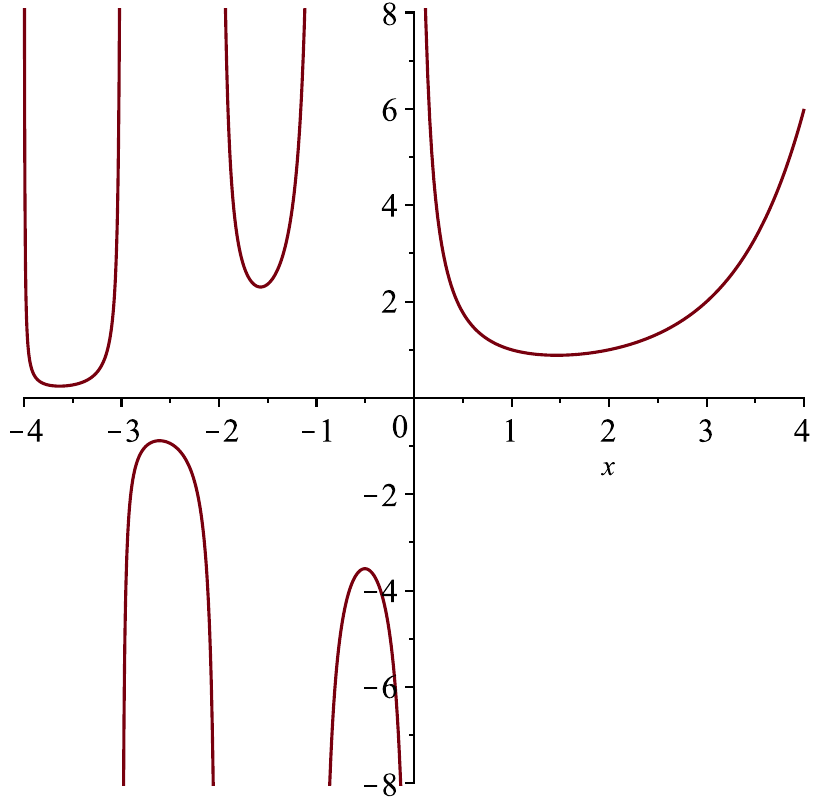}
\caption{A graph of the $\Gamma$ function. Note the poles at non-positive integers.}
\label{fig:gamma}
\end{figure}

In \cite{CorlessBorwein}, Stirling's approximation is used to give an asymptotic approximation to $\invGs_0$ using the Lambert $W$ function.
\begin{equation} \label{eq:StirApproxInv}
    \invGs(x)\approx \frac12 + \frac{\ln\left(x/\sqrt{2\pi}\right)}{W\left(e^{-1}\ln\left(x/\sqrt{2\pi}\right)\right)}\ .
\end{equation}
This approximation is remarkably accurate, even for relatively small arguments. For example, it gives
$\invGs(24)\approx 4.99$, where the exact answer is $5$. Trying a smaller value, we see $\invGs(4)\approx 3.653$.
This can be checked by comparing $\Gamma(3.653)=3.948$ with $4$.
Finally,
we note that Pedersen~\cite{Pedersen2015} has shown that one branch of the inverse can be extended to the complex plane,
and it can be represented as a Pick function.

\section{Definition of real branches}
We first consider the inverse of $\Gamma$ as a function of the reals.
The graph of $\invGs$ is show in Figure \ref{fig:inverse}, as produced
using the Maple parametric plot command:
\begin{small}
\begin{verbatim}
  plot([GAMMA(x),x,x=-4..3.5], discont=true);
\end{verbatim}
\end{small}
The turning points of $\Gamma$ define the boundaries of the branches, and the partition of the range of $\invGs$.
To define the branches, we introduce some notation.
Since the Gamma function has extremal points at
\begin{equation}\label{eq:derivzero}
  \frac{d\Gamma(z)}{dz}=\Gamma(z)\Psi(z) = 0\ ,
\end{equation}
we denote the points $(\psi_i,\gamma_i)$ by the definitions
\begin{align}\label{def:turning}
  \Psi(\psi_0)=0\ , & \mbox{ where }\psi_0>0\ , \\
  \Psi(\psi_k)=0\ , & \mbox{ where } k<0 \mbox{ and } k<\psi_k<k+1\ ,
\end{align}
together with $\gamma_k=\Gamma(\psi_k)$.
Numerical values are displayed in Table~\ref{tab:branchpoints}.  Further values are given in Table~5.4.1 of~\cite{DLMF}, with our $k$ corresponding to their $-n$.
\begin{figure}
  \centering
  \includegraphics[width=.875\columnwidth]{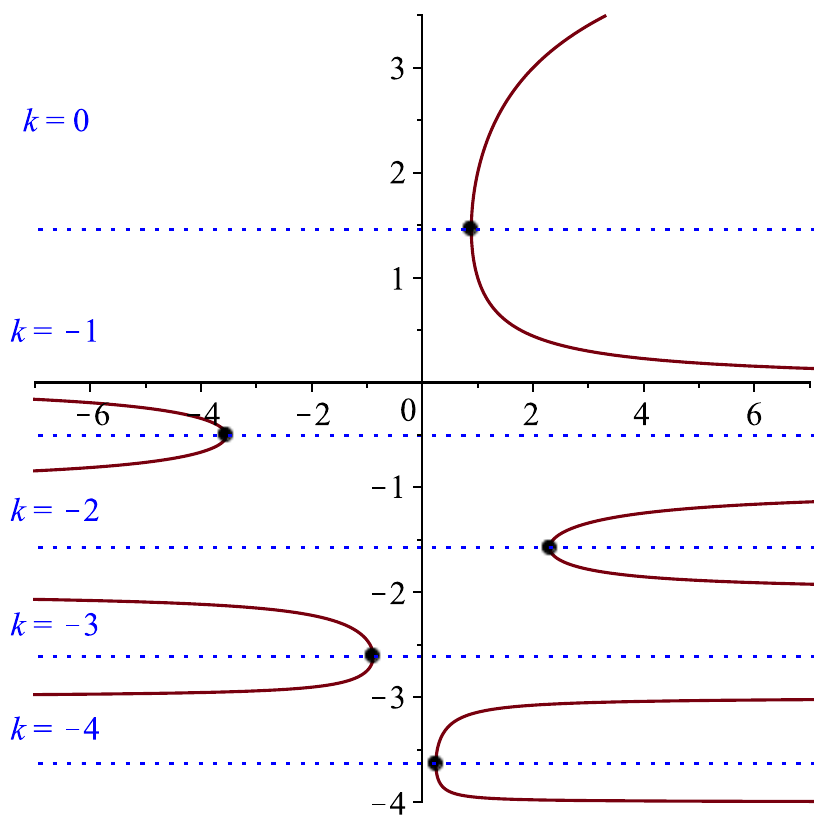}
  \caption{Inverse of the $\Gamma$ function showing the critical points and the division of the range into branches.}
\label{fig:inverse}
\end{figure}

The branches are defined by their ranges as displayed in Table~\ref{tab:branchdefs}.
For the general approach to branched functions, see \cite{Jeffrey2014AISC}.
The numbering of the branches is chosen so that $k=0$ is the principal branch, while the remaining branches receive negative
numbers so that the labels follow roughly the ranges taken by the branches.
Algorithm~\ref{algo:real_branch} determines the domain of $x$ values corresponding to a value of $\Gamma(x)$ and branch number.

Some special cases are worth noting. 
Since $\Gamma(2)=1!=\Gamma(1)=0!=1$, the corresponding inverses
must lie on different branches.
This requires $\invGs_0(1)=2$ and $\invGs_{-1}(1)=1$.
Some other special values of interest are $\invGs_{0}(\sqrt{\pi}/2)=3/2$, which lies very close
to the branch point, and
$\invGs_{-1}(\sqrt{\pi})=1/2$.

\section{Extension to the Complex Plane}
We follow \cite{Jeffrey2017} and~\cite{JeffreyWatt2022} in considering extensions to the complex plane. We start with the domain of $\Gamma(z)$, which will become the range of $\invGs$.
We wish to partition $\mathbb C$ so that $\Gamma$ is injective
on each element of the partition.  
We argue that the parallel lines $z = \psi_k + iy$ partition $\mathbb C$ in this way.
\begin{Th}
Let $\mathcal D_0 = \{z \;|\; \psi_0 \le \Re z\}$ and
$\mathcal D_k = \{z \;|\; \psi_k \le \Re z < \psi_{k+1}\}$
for $k < 0$.  Then $\Gamma: \mathcal D_k \rightarrow \mathbb C$ is injective for $k \le 0$.
\end{Th}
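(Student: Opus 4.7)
The plan is to combine real-line monotonicity on $\mathcal D_k \cap \R$ with a winding-number count, using conjugate symmetry $\Gamma(\bar z) = \overline{\Gamma(z)}$ and Stirling's asymptotic decay in the imaginary direction to reduce the complex claim to behaviour on the vertical boundary lines. On $\mathcal D_k \cap \R$ the points $\psi_k$ are by definition the consecutive zeros of $\Psi$, so $\Psi$ has constant sign on each open interval $(\psi_k, \psi_{k+1})$ and on $(\psi_0, \infty)$; since $\Gamma$ is pole- and zero-free on these intervals, $\Gamma' = \Gamma\Psi$ has constant sign as well, and $\Gamma$ is therefore strictly monotonic and injective on $\mathcal D_k \cap \R$.

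For the complex case, fix $c \in \C$ and let $N(c) = \#\{z \in \mathcal D_k : \Gamma(z) = c\}$. I would apply the argument principle to $\Gamma - c$ on the truncated rectangle $R_T = \mathcal D_k \cap \{|\Im z| \le T\}$, further truncated by $\Re z \le M$ when $k = 0$, then let $T, M \to \infty$. Stirling's asymptotic, which gives $\log|\Gamma(x+iy)| = (x-\half)\log|y| - \pi|y|/2 + O(1)$ uniformly in bounded $x$, forces the two horizontal edges to contribute nothing to the change in $\arg(\Gamma - c)$ in the limit, and when $k = 0$ the far vertical edge $\Re z = M$ is likewise killed since $|\Gamma(M+iy)| \to \infty$ uniformly in $y$. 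Thus $N(c)$ is determined entirely by the image of the vertical boundary line $\Re z = \psi_k$ and, for $k < 0$, the line $\Re z = \psi_{k+1}$.

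The main obstacle is controlling this boundary winding. Conjugate symmetry reduces the analysis to $\Gamma(\psi_k + iy)$ for $y \ge 0$: this curve meets the real axis only at $y = 0$ with value $\gamma_k$, and because $\Gamma'(\psi_k) = \Gamma(\psi_k)\Psi(\psi_k) = 0$ it does so with a cusp. For large $y$, Stirling forces $|\Gamma(\psi_k + iy)| \to 0$ and $\arg \Gamma(\psi_k + iy)$ to drift monotonically to infinity. The technical heart is to rule out intermediate oscillation in $\arg \Gamma(\psi_k + iy)$, so that the image of $\partial \mathcal D_k$ is a simple curve winding at most once around any prescribed $c$. Once this is shown, the argument-principle count yields $N(c) \le 1$, which, together with the real-line step handling the case that a preimage happens to lie on $\R$, establishes injectivity of $\Gamma$ on $\mathcal D_k$ for each $k \le 0$.
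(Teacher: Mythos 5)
Your route (argument principle on truncated rectangles, Stirling decay to kill the horizontal and far vertical edges, reduction to the winding of the images of the lines $\Re z=\psi_k$) is genuinely different from the paper's, but as written it has real gaps. First, the step you yourself call the ``technical heart''---that $\arg\Gamma(\psi_k+iy)$ does not oscillate, so that the image of $\partial\mathcal D_k$ winds at most once about any prescribed $c$---is exactly where the whole difficulty of the theorem sits, and it is asserted, not proved; reducing injectivity to an unproved monotonicity statement about $\arg\Gamma$ along the critical lines is a plan, not a proof. Second, the argument-principle bookkeeping fails for $k<0$: each strip $\mathcal D_k$ with $k\le -1$ contains exactly one simple pole of $\Gamma$ (at the non-positive integer $k+1$), so the boundary winding counts zeros \emph{minus} poles, and ``winds at most once'' would only give two solutions, not one. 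You would need either to show the winding is at most zero or to pass to the entire function $1/\Gamma$ and redo the edge estimates there. Third, the argument principle says nothing when $c$ lies on the image of the boundary line itself, i.e.\ when the putative preimages sit on $\Re z=\psi_k$; your ``real-line step'' handles preimages on $\mathbb R$, but not this case, and it requires a separate argument.

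For comparison, the paper avoids asymptotics and winding numbers entirely: given $z_1\ne z_2$ in $\mathcal D_k$ with $\Gamma(z_1)=\Gamma(z_2)$, it joins them by a straight segment $L(\lambda)$ inside the (convex) strip, applies Rolle's theorem to the real function $N(\lambda)=|1/\Gamma(L(\lambda))|^2$, and argues that the resulting critical point forces a zero of $\Psi$ on the segment, which can only happen on the line $\Re z=\psi_k$; the remaining case $\Re z_1=\Re z_2=\psi_k$ is then excluded using the conjugate symmetry $\overline{\Gamma(z)}=\Gamma(\overline z)$ and the fixed sign of $\Im\Gamma$ on each half of that line. If you want to salvage your approach, the pole issue and the boundary-line case are repairable (work with $1/\Gamma$ and add a symmetry argument of the paper's kind), but you must still actually establish the non-oscillation of $\arg\Gamma(\psi_k+iy)$---for instance via monotonicity in $y$ of both $|\Gamma(\psi_k+iy)|$ and $\arg\Gamma(\psi_k+iy)$---and that is a substantive piece of analysis, not a remark.
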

\begin{proof}
We start with the observation that all zeros of $\Psi$ are real, and that $\Psi$ is analytic everywhere except on the non-positive integers.
We show we cannot have two points $z_1, z_2 \in \mathcal D_k$ with $z_1 \ne z_2$ but $\Gamma(z_1) = \Gamma(z_2)$.   
Since $\Gamma$ has no zeros, it is useful to work with the entire function $1/\Gamma$  and we require $1/\Gamma(z_1) = 1/\Gamma(z_2)$.  
Consider $1/\Gamma$ restricted to 
$\{z \; | \; z = L(\lambda) = (1-\lambda) z_1+ \lambda z_2, \lambda \in [0,1]\}$,
that is a line in $\mathcal D_k$ from $z_1$ to $z_2$.
Then the real and imaginary parts of $1/\Gamma(L(\cdot))$ are both $C^\infty[0,1]$
real-valued functions so $N(\lambda) = |1/\Gamma(L(\lambda))|^2$ 
is a differentiable real function on $[0,1]$ with $N(0) = N(1)$.  
Bearing in mind that $z_1 \ne z_2$, by Rolle's theorem we must have 
$N^\prime(\lambda) = 0$ for some $\lambda \in (0,1)$.   
But $N^\prime(\lambda) = 0$ only when $\Psi(L(\lambda)) = 0$,
\textit{i.e.} for $z = \psi_k$, which is the unique point in $\mathcal D_k$ for which $\Psi(z) = 0$.  So unless the line segment $L: [0,1] \rightarrow \mathcal D_k$ crosses $\psi_k$, $\Gamma(z_1) \ne \Gamma(z_2)$.  
\begin{table}[t]
  \centering
  \begin{tabular}{|c|c|c|}
  \hline
    $k$ & $\psi_k$ & $\gamma_k$ \upstrut{2.6ex}\\[1pt]
     \hline
   \upstrut{9pt}$\phantom- 0$ & $\phantom- 1.461632$ & $\phantom- 0.885603$ \\
   \upstrut{9pt}$ -1$         & $         -0.504083$ & $         -3.544644$ \\
   \upstrut{9pt}$ -2$         & $         -1.573498$ & $\phantom- 2.302407$ \\
   \upstrut{9pt}$ -3$         & $         -2.610720$ & $         -0.888136$ \\
   \upstrut{9pt}$ -4 $        & $         -3.635293$ & $\phantom- 0.245127$ \\
   \upstrut{9pt}$ -5 $        & $         -4.653237$ & $         -0.052780$ \\
   \hline
  \end{tabular}\vspace{1.5ex}
  \caption{Critical values for branches of \vstrut{0pt}{11pt}$\invmathsvs{\Gamma}_k$. 
  These points are plotted as $(\gamma_k,\psi_k)$
  in Figure \ref{fig:inverse}.}
  \label{tab:branchpoints}
\end{table}
\begin{table}[t]
  \centering
  \begin{tabular}{|c|c|c|}
     \hline
     $k$ & Range condition & Argument range \upstrut{2.6ex}\\[1pt]
     \hline
    $\phantom- 0$
      & \upstrut{12pt}
        \(\psi_0 \le \invGs_0 \phantom{< \psi_0} \)
      & \(g \ge \gamma_{0\phantom{-}}\) \\
    $-1$
      & \upstrut{23pt}
        \(\begin{aligned}
           0         &<   \invGs_{-1} < \psi_0 \\ 
           \psi_{-1} &\le \invGs_{-1} < 0
        \end{aligned}\) 
      & \(\begin{aligned}
           g &> \gamma_0 \\
           g &\le \gamma_{-1}
        \end{aligned}\) \\
    $-2$
      & \upstrut{23pt}
        \(\begin{aligned}
          -1       &<   \invGs_{-2} < \psi_{-1} \\
          \psi_{-2}&\le \invGs_{-2} <-1
        \end{aligned}\)
      & \(\begin{aligned}
           g &< \gamma_{-1} \\
           g &\ge \gamma_{-2} 
        \end{aligned}\)
    \\[10pt]\hline
    \end{tabular}\vspace{1.5ex}
  \caption{Branches of inverse real $\Gamma$ with notation $g=\Gamma(x)$ and
  $x=\invmathsvs{\Gamma}(g)$. If $g$ falls outside the intervals shown, there is no real value for inverse Gamma.}\label{tab:branchdefs}
\end{table}
\begin{algorithm}[t]
\caption{$\Gamma$ Real Domain Selection and Inverse by Index\upstrut{2.2ex}}
\label{algo:real_branch}
\begingroup
\newcommand\hi{\ensuremath{\text{hi}}}
\newcommand\lo{\ensuremath{\text{lo}}}
\newcommand\errorkw{\textbf{error }}
\newcommand\xorkw{\textbf{xor }}
\newcommand\orkw{\textbf{or }}
\newcommand\andkw{\textbf{and }}
\begin{algorithmic}
\LComment{Determine the $x$ domain given $g = \Gamma(x)$ and index $k$.}
\LComment{Returns bounds as pair $(lo, hi)$, $lo \in \mathbb R$, $hi \in \mathbb R \cup +\infty$.}
\Function{{\sc RealGammaDomain}}{$g \in \mathbb R, k \in \mathbb Z_{\le 0}$\upstrut{2.5ex}}
\LComment{Exclude nonexistent branches.\upstrut{2ex}}
\If{$k \ge 1 \text{ \orkw} g = 0 \text{ \orkw} (k = 0 \text{ \andkw} g < 0)$ } \errorkw \EndIf

\LComment{How much to adjust end points near $k = 0$.\upstrut{3.5ex}}
\If{$k = 0$} $\lo_0 \gets 1;\; \hi_0 \gets +\infty$
\ElsIf{$k = -1 \text{ \andkw} g > 0$}  $\lo_0 \gets 0;\; \hi_0 \gets 1$
\Else\ $\lo_0 \gets 0;\; \hi_0 \gets 0$
\EndIf

\LComment{Determine which side of the pole.\upstrut{3.5ex}}
\If{$k$ is even \xorkw $g > 0$}
       \State $\lo \gets k+1+\lo_0$
       \State $\hi \gets \text{solve } \Psi(x) = 0 \text{ for } x \in (\lo, k+2+\hi_0)$
       \If{$|g| < |\Gamma(\hi)|$}  \errorkw \EndIf
\Else
       \State $\hi \gets k + 1 + \hi_0$
       \State $\lo \gets \text{solve } \Psi(x) = 0 \text{ for } x \in (k+\lo_0, \hi)$
       \If{$|g| < |\Gamma(\lo)|$} \errorkw \EndIf
\EndIf
\State \Return (\lo, \hi)
\EndFunction

\LComment{Compute real inverse $\Gamma$ on given branch.\upstrut{4ex}}
\Function{{\sc RealInvGamma}}{$g \in \mathbb R, k\in \mathbb Z_{\le 0}$}\upstrut{2.5ex}
    \State $(\lo, \hi) \gets \text{{\sc RealGammaDomain}}(g, k)$
    \State \Return solve $\Gamma(x) = g$ for $x\in (\lo, \hi)$
\EndFunction
\end{algorithmic}
\endgroup
\end{algorithm}

The segment $L$ will cross $\psi_k$ only if $\Re z_1 = \Re z_2 = \psi_k$.
It remains to show that we cannot have $\Gamma(z_1) = \Gamma(z_2)$ in this case.
By the same argument as earlier, we cannot have $\Im z_1$ and $\Im z_2$ with the same sign, since that would require a zero of $\Psi$ off the real line.  Therefore $\Im z_1$ and $\Im z_2$ must have opposite signs.
Without loss of generality, let $z_1 = \psi_k + iy_1$ and $z_2 = \psi_k - iy_2$ with $y_1, y_2 \in \mathbb R_{>0}$. 
We observe that $\Im \Gamma(\phi_k + i\eta)$
must have a fixed sign for all $\eta > 0$ and a fixed sign for all $\eta < 0$.   
Otherwise, by continuity, $\Gamma$ would have a real value off the real line, which does not occur. 
Since \[\overline{\Gamma(z_2)} = \Gamma(\overline{z_2}) = \Gamma(\psi_k + iy_2),\]
we must have 
\[\sign \Im \Gamma(z_1) = \sign \Im \overline{\Gamma(z_2)} = -\sign \Im \Gamma{z_2} \]
but 
\(\Gamma(z_1) = \Gamma(z_2)\) implies
\(  -\sign \Im \Gamma(z_2) = -\sign \Im \Gamma(z_1) \)
so $y_1 = y_2 = 0$ and $z_1 = z_2$, contradicting our hypothesis.
\end{proof}

\begin{figure}
    \centering
    \includegraphics[width=.8\columnwidth]{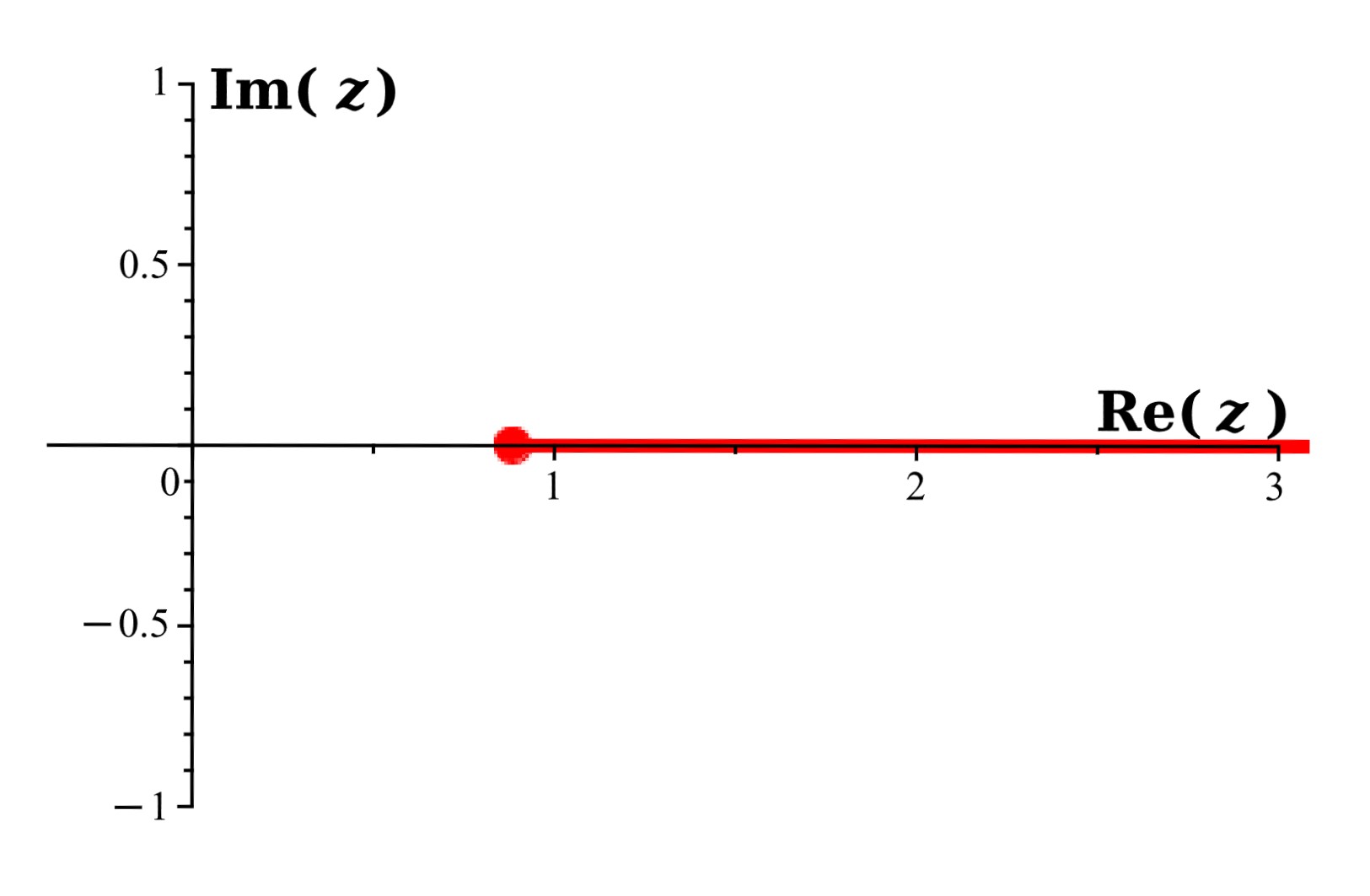}
    \\[1.5\baselineskip]
    \includegraphics[width=.775\columnwidth]{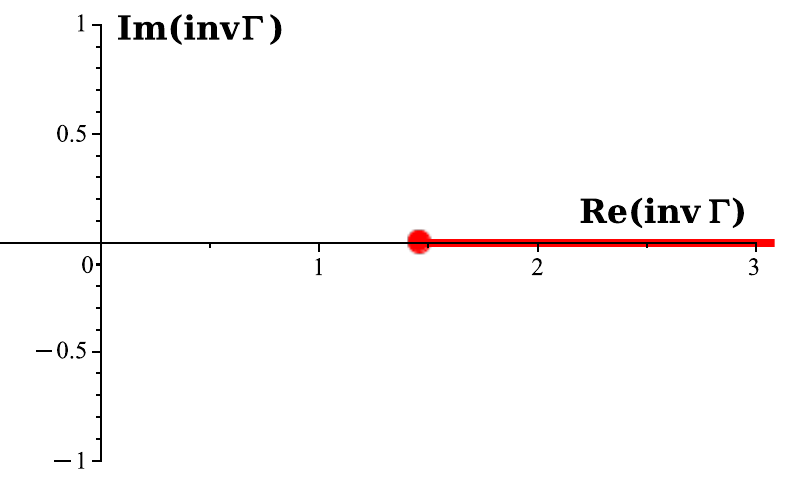}
    \caption{The domains and ranges for $\Gamma$ and $\invGs$. On the real axes are marked the values taken by the principal branch $\invGs_0$.}
    \label{fig:branches1}
\end{figure}
\begin{figure}
    \centering
    \includegraphics[width=.85\columnwidth]{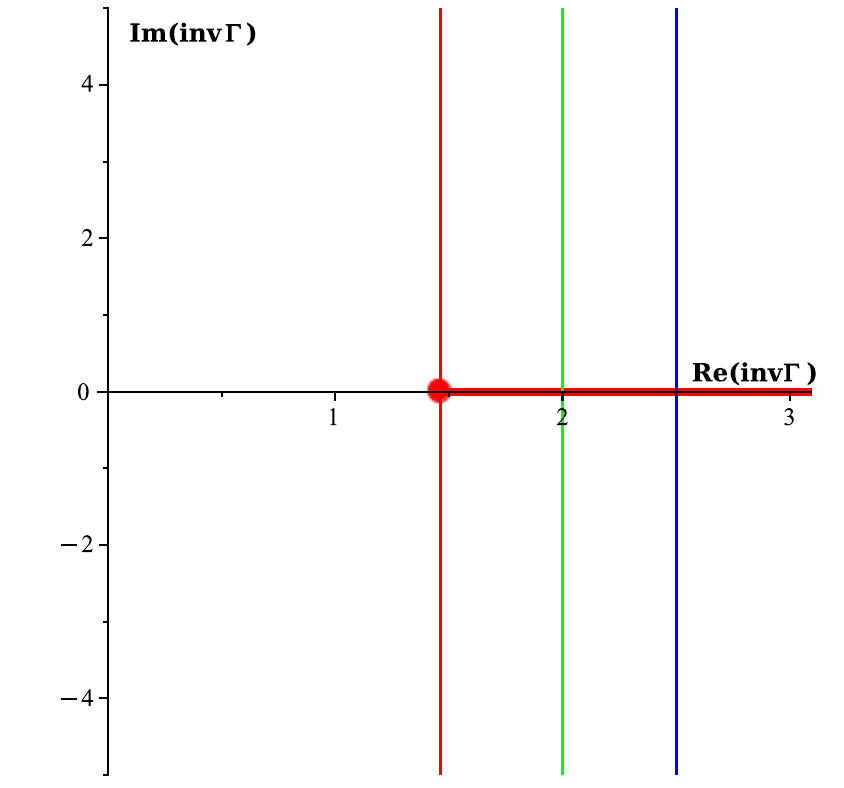}
    \caption{Contours in $\mathcal D_0$ to visualize the range of the $\invGs_0$ principal branch.}
    \vspace{.8\baselineskip}
    \label{fig:branches2a}
\end{figure}
\begin{figure}
    \centering
    \hspace{5mm}\includegraphics[width=.875\columnwidth]{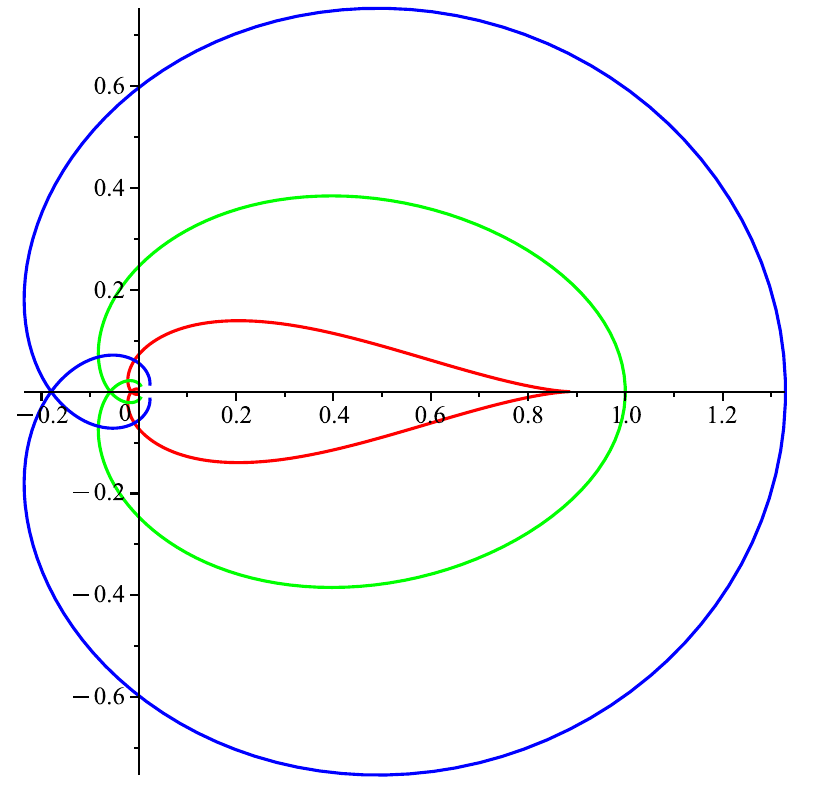}
    \vspace{.4\baselineskip}
    \caption{Contours of Figure~\ref{fig:branches2a} after mapping with $\Gamma$. Where a contour intersects itself is the edge of the branch of $\invGs$ in its range. 
    By magnifying the region around the origin, one can see that the red contour intersects 
    itself and the axis to the left of the origin. }
    \label{fig:branches2b}
\end{figure}
\begin{figure}
    \centering
    \hspace{5mm}\includegraphics[width=.975\columnwidth]{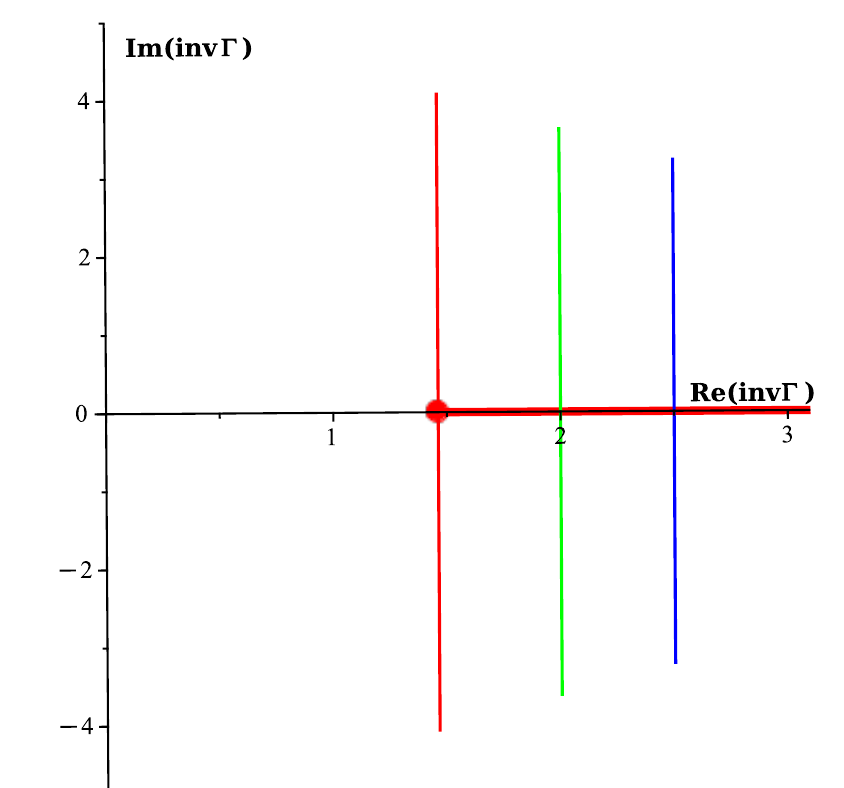}
    \caption{Corrected contours in $\mathcal D_0$ that visualize the range of the $\invGs_0$ principal branch.}
    \label{fig:G02trim}
\end{figure}
\begin{figure}
    \centering    \includegraphics[width=.875\columnwidth]{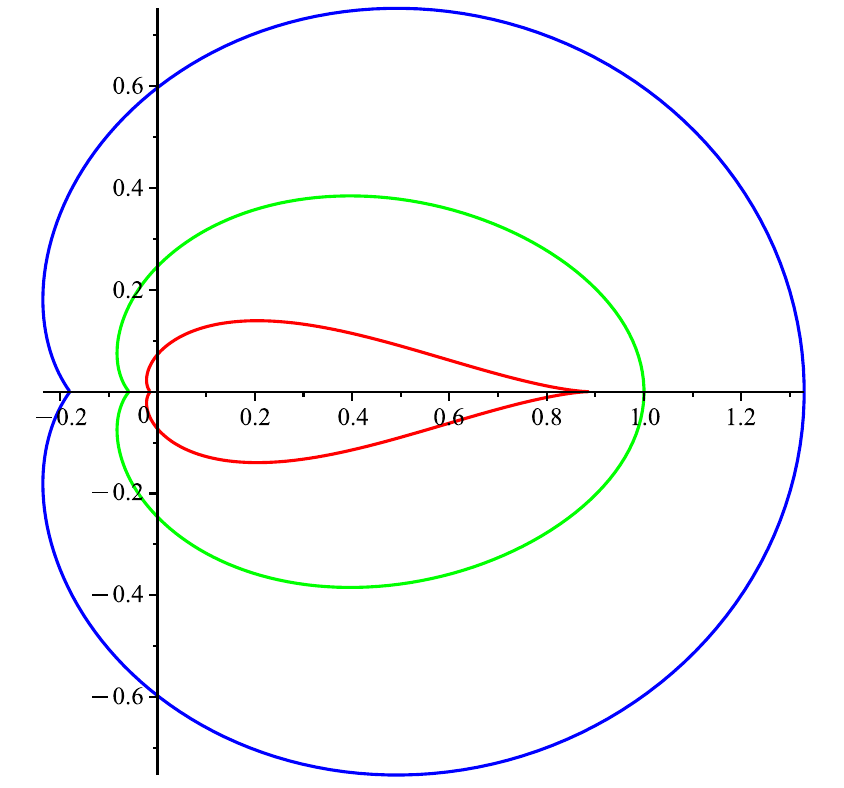}
    \vspace{2mm}
    \caption{Corrected contours of Figure~\ref{fig:G02trim} after mapping with $\Gamma$. The contours no longer intersect and stop at the axis.}
    \label{fig:Go2trim}
\end{figure}
\begin{figure}
    \vspace{1.7mm}
    \centering
    \includegraphics[width=0.875\columnwidth]{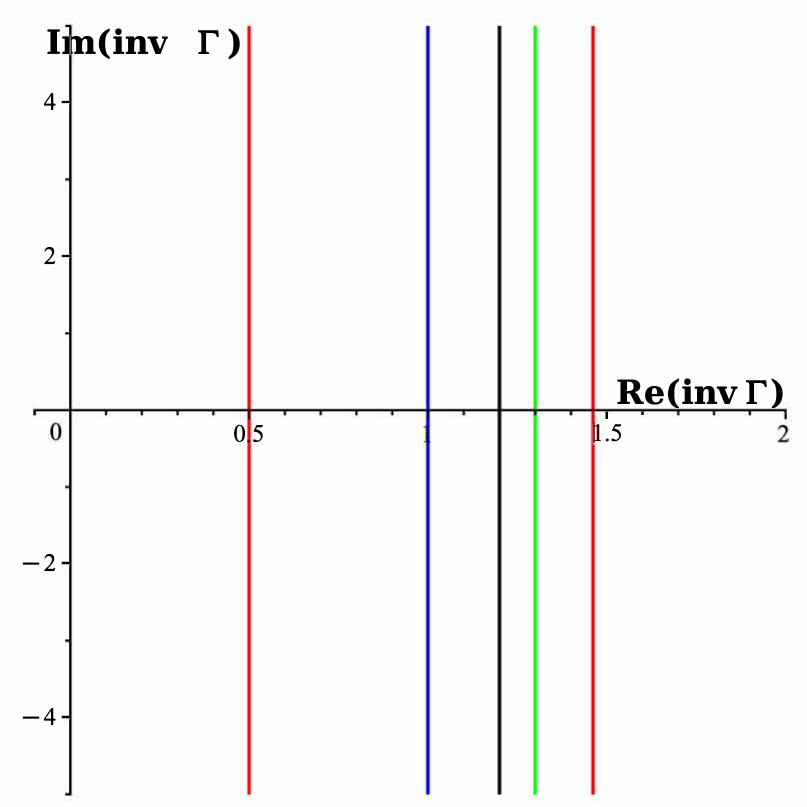}
    \vspace{1.3mm}
    \caption{Exploring the domain of the principal branch by contours which include forbidden parts of the
    domain of $\invGs$ (principal branch).}
    \label{fig:G02a}
\end{figure}

\begin{figure}
    \centering
    \includegraphics[width=0.855\columnwidth]{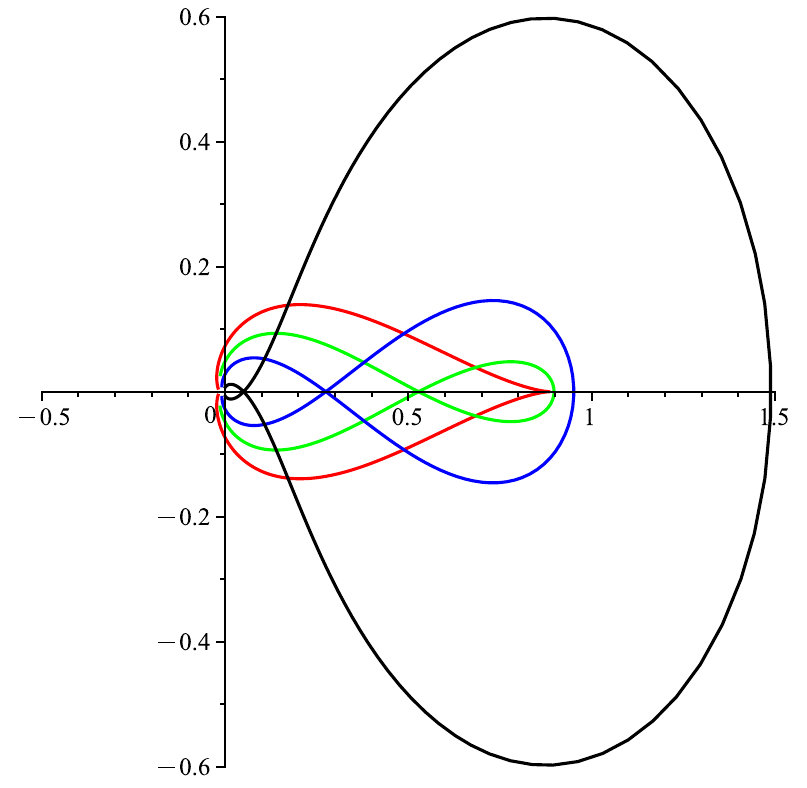}
    \caption{Contours from figure \ref{fig:G02a} showing intersections from sections of the contour
    touching the real axis. The contours in figure \ref{fig:G02a} must be trimmed both near the axis and at the upper end.}
    \label{fig:invG02a}
\end{figure}

\section{Graphical Representation}
In order to obtain a graphical representation of the branches of $\Gamma$, we begin by
setting up two complex planes: the range of $\Gamma$, which is also the domain of $\invGs$, and vice versa. On these we mark the intervals applying to the real values of the principal branch.
In Figure \ref{fig:branches1}, the top axes are the domain of $\invGs$ (and the range of $\Gamma$) and since $\invGs$ is real for $x\ge \gamma_0\approx 0.886$, this interval 
is marked in red on the real axis. The bottom axes are the range of $\invGs$ and the real axis is marked with the interval $\invGs\ge \psi_0\approx 1.46$, in which the real values of $\invGs$ lie,  
both sets of markings being in accord with Table~\ref{tab:branchdefs}.
We now draw contours in the lower axes and map them using $\Gamma$ to the domain in the top axes.

To move from real values to complex values, we erect contours on the plotted values of $\invGs$, as shown in Figure \ref{fig:branches2a}. The length of the contours is arbitrary at this stage (they are straight for convenience). We now apply $\Gamma$ to the contours and obtain the curves in Figure \ref{fig:branches2b} in the domain of $\invGs$. We notice that the curves self-intersect. Where they intersect is a discontinuity in function values; in other words a branch cut. The curves after the intersection are violating our requirement that the function is injective. Therefore we return to the straight contours in figure \ref{fig:branches2a} and trim their lengths until their images meet on the real axis without intersecting.
This stage of the construction is shown in figures \ref{fig:G02trim} and \ref{fig:Go2trim}.

We have now established that additional contours stretching to the right of those shown in
figure \ref{fig:G02trim} will account for all of the complex plane
 outside the contours shown in figure \ref{fig:Go2trim}. 
 This leaves the region inside the innermost (red) curve unaccounted for. In order to place contours inside the red curve, we have to add contours to Figure \ref{fig:G02trim} and them map them as before; 
there is, however, a difficulty. The values taken by the principal branch obey $\invGs \ge \gamma_0$, and those values
have already been used to build contours. 
Looking at figure \ref{fig:branches2a}, we see we cannot use the segment of the real axis 
that is not marked in red. We can, though, add contours there for exploration. These are shown in figure \ref{fig:G02a} and their mapping in figure \ref{fig:invG02a}.
Now the mapped contours self-intersect in two places, indicating that the contours need trimming at two places. The trimmed contours are shown in figure \ref{fig:G4c}, and the resulting inner region of the map is shown in figure \ref{fig:G4}.

Combining the results above, we obtain the complete description of the range, and hence the definition, of the principal branch; see Figure \ref{fig:branch3}. The construction we have been compiling has now revealed two things. In the domain of $\invGs_0$ we have two lines of discontinuity, where we had to avoid self-intersections. 
In other words, there are two branch cuts: $(-\infty,0)$ and
$(0,\gamma_0)$, as shown in Figure \ref{fig:cuts0}. 
\FloatBarrier

\begin{figure}
    \centering
    \includegraphics[width=0.875\columnwidth]{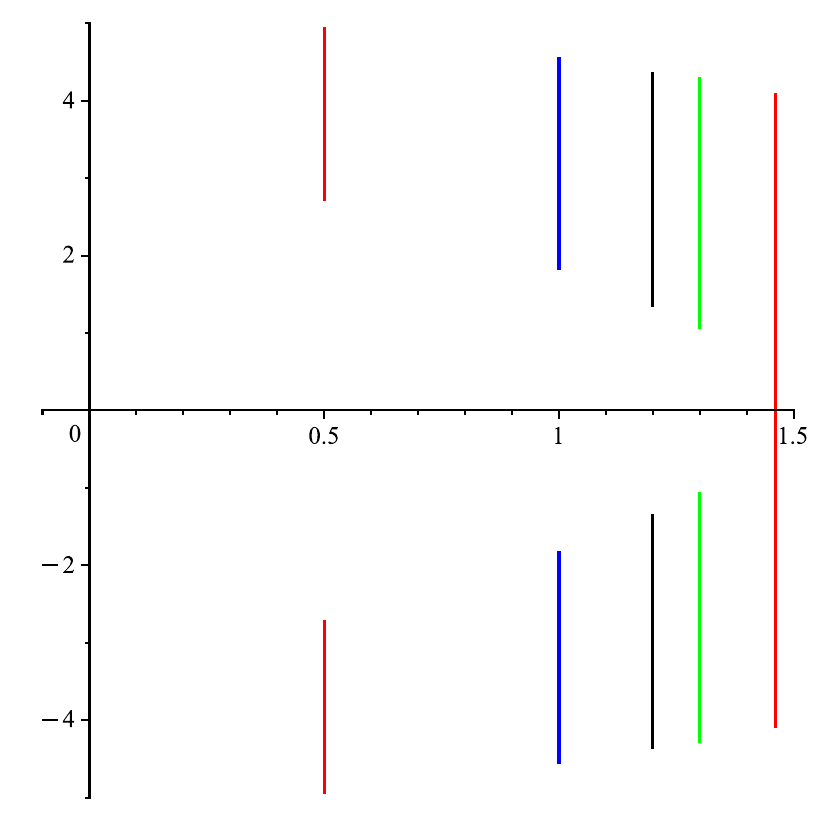}
    \caption{The contours of figure \ref{fig:G02a} trimmed to avoid self-intersections.}
    \label{fig:G4c}
\end{figure}
\FloatBarrier

\begin{figure}
    \centering
    \includegraphics[width=0.875\columnwidth]{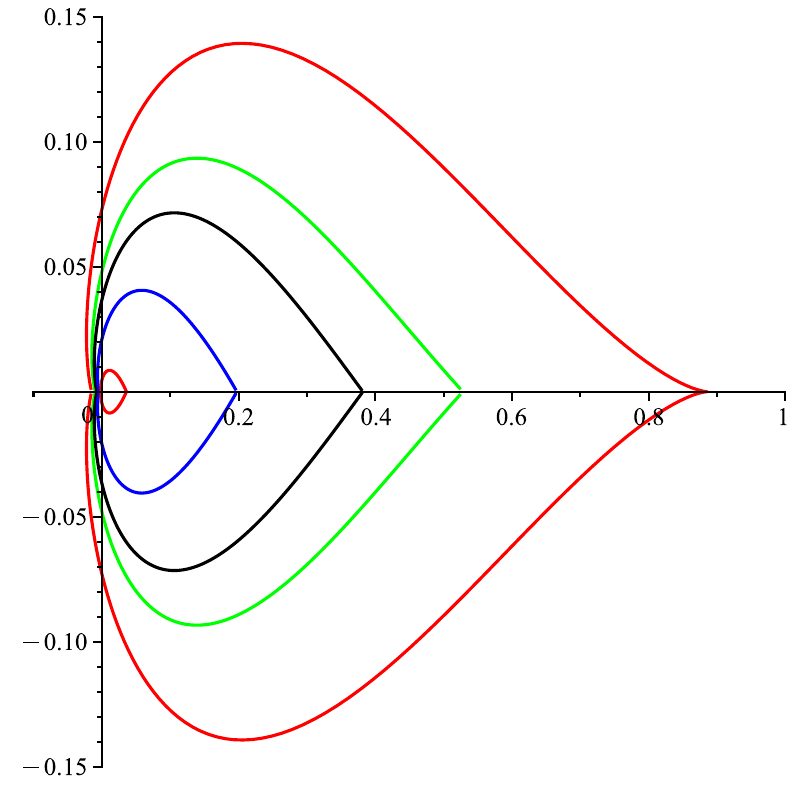}
    \caption{The contours in 
    figure \ref{fig:G4c}, mapped using $\Gamma$.
    Note that the left sides of the contours intersect the negative real axis.}
    \label{fig:G4}
\end{figure}

\begin{figure}
\centering    \includegraphics[width=0.75\columnwidth]{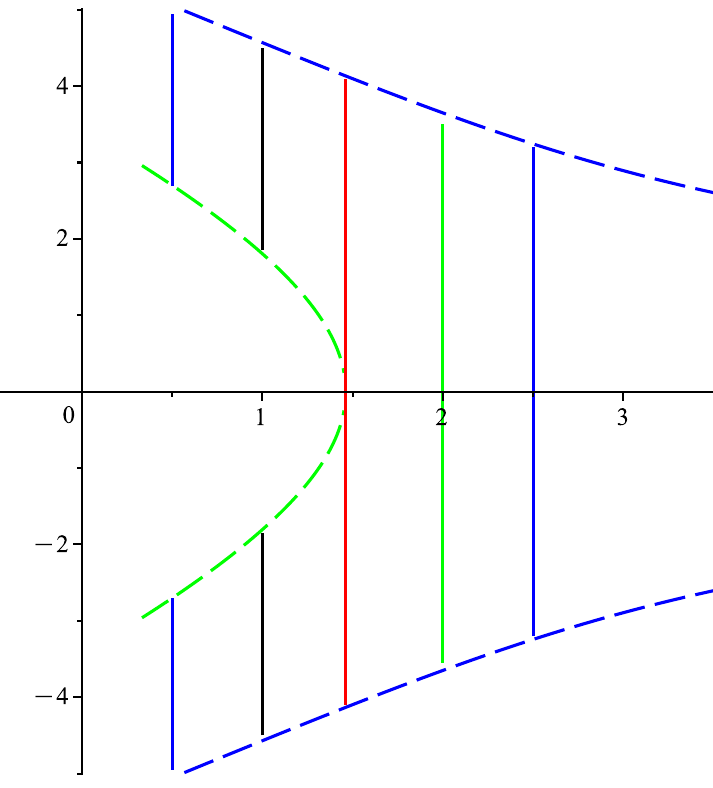}
\vspace{4pt}
\caption{The range in $\mathcal D_0$ of $\invGs_0$ as revealed by the contours constructed extending from the real data. The boundary curves (dashed) are obtained from mapping the branch cuts shown in Figure \ref{fig:cuts0} using $\invGs$. Note that the colours of the boundaries correspond to the colours of the branch cuts.}
 \label{fig:branch3}
\end{figure}
\begin{figure}
    \vspace{-1mm}
    \centering
    \includegraphics[width=0.875\columnwidth]{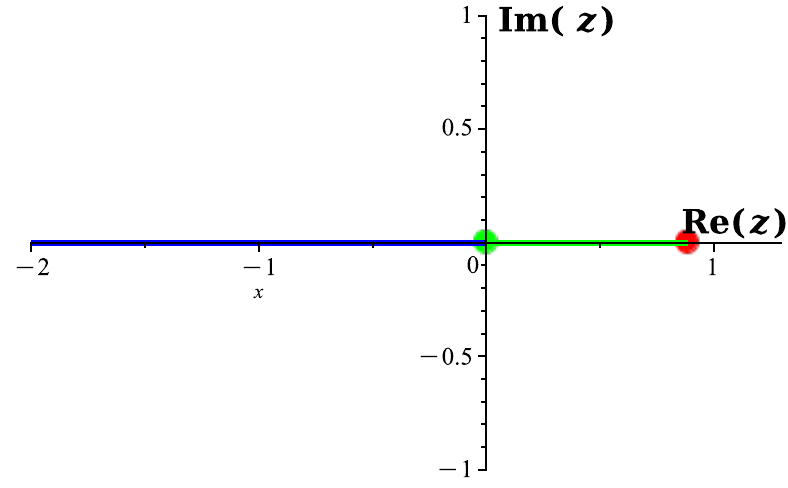}
    \vspace{12.9mm}
    \caption{The branch cuts for the principal branch $\invGs_0$. Note that although the two cuts
    appear to form a continuous line, they are separated by a singularity at the origin.}
    \vspace{-6mm}
    \label{fig:cuts0}
\end{figure}
\FloatBarrier

\begin{figure}
    \centering    \includegraphics[width=.83\columnwidth]{pix/G01.jpg}
    \\
\hspace{3.5mm}\includegraphics[width=.77\columnwidth]{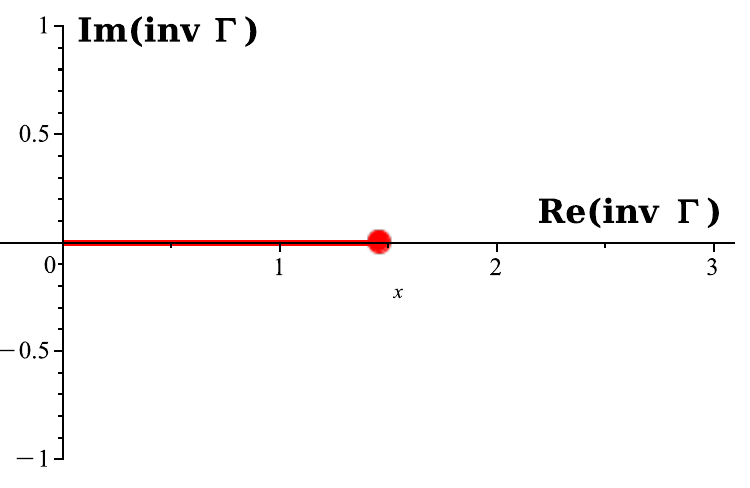}
    \caption{The domains and ranges for branch $k=-1$. In contrast to $\invGs_{0}$, now $\invGs_{-1}$ takes values between $0$ and $\psi_0$.}
    \label{fig:branches1m}
    
\end{figure}

\begin{figure}
    \centering
    \includegraphics[width=.85\columnwidth]{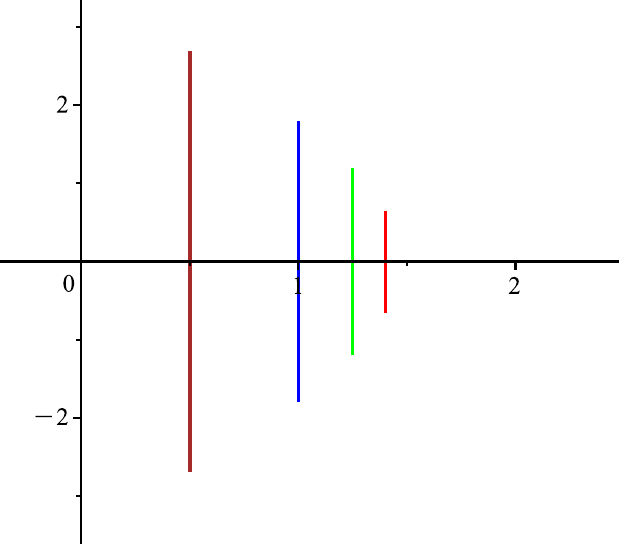}
    \caption{The range in $\mathcal D_{-1}$ of $\invGs_{-1}$ as revealed by the contours constructed extending from the real data. The range fits with Figure \ref{fig:branch3} and the two regions share a boundary.}
    \label{fig:branch3m}
\end{figure}

We can now 
map the branch cuts into the range of $\invGs$ using a crude Maple function for $\invGs$, and the result is a plot of the boundaries to the range of $\invGs_0$.
It should be noted that it is tempting to treat the two branch cuts as one, and simply say that there is a cut $-\infty<x\le \gamma_0$.

To show that the approach applies to all branches, we briefly consider branch $k=-1$.
This branch shares a singular point with branch $k=0$, namely $(\phi_0,\psi_0)$, but now values of $\invGs_{-1}$ decrease to zero. Thus we replace Figure \ref{fig:branches1}
with Figure \ref{fig:branches1m}. Similarly, Figure \ref{fig:branch3} is replaced by 
Figure \ref{fig:branch3m}, and we note that the two figures fit together like tectonic plates, and share a boundary.

\section{Conclusion}
In this paper, we have continued to develop the principle that a discussion of the branches of a multivalued function should start in the \textit{range} of the function. This is in contrast to the traditional treatments, for example, in \cite{DLMF},
\balance
where discussion starts by defining branch cuts in the \textit{domain} of the function (see, for example, \cite[Chap. 4]{DLMF} and their sections on logarithm and arctangent). For functions such as $\invGs$ or $W$, where the branch ranges are not regular, the branch cuts in the domain alone are not sufficient for understanding the structure of the branches. It is clear that the inverse $\Gamma$ function still possesses many interesting properties, which have not been touched yet.
These issues are being explored in depth at present.

\end{document}